\numberwithin{equation}{section}
\definecolor{darkgreen}{rgb}{0,0.7,0}
\newcommand{\RR}{\mathbb{R}}
\newcommand{\norm}[1]{\Vert #1 \Vert}
\newcolumntype{M}[1]{>{\centering\arraybackslash}m{#1}}
\providecommand{\keywords}[1]{\textbf{{Keywords:}} #1}
\numberwithin{equation}{section}
\tikzset{
  frame/.style={
    rectangle, draw,
    text width=6em, text centered,
    minimum height=4em,drop shadow,fill=white,
    rounded corners,
  },
  line/.style={
    draw, -{Latex},rounded corners=3mm,
  }
}
\newtheorem{remark}{Remark}[section]
\newtheorem{theorem}{Theorem}[section]
\newtheorem{proposition}{Proposition}[section]
\title{\textbf{Some linear feedback laws for stabilisation of sterile insect technique control system }}
\author[]{Kala Agbo bidi \large\orcidlink{0009-0005-0695-2381} \thanks{Sorbonne Universit\'{e}, CNRS, Universit\'{e} Paris-Cit\'{e}, Laboratoire Jacques-Louis Lions, LJLL,  INRIA  \'{e}quipe CAGE, F-75005 Paris, France. \texttt{kala.agbo\_bidi@sorbonne-universite.fr}}} 
\date{\empty}
\begin{document}

\maketitle

\begin{abstract}
\textbf{The implementation of the Sterile Insect Technique (SIT) to manage a target population has been the focus of numerous recent scientific studies. The present work focuses on a feedback law that depends linearly on the state variables of the SIT control system. We provide both mathematical proof and numerical illustrations demonstrating the global asymptotic stability of the population to zero when releasing a number of sterile insects proportional to different state variables of the SIT model.
 }

\end{abstract}
\noindent \keywords{Sterile Insect Technique, Pest control,  Dynamical control system, Feedback design, Lyapunov global stabilization, Mosquito population control, Vector-borne disease.}

\section{Introduction}
In recent decades, the use of the Sterile Insect 
Technique (SIT) has been extended to the management 
of mosquito species such as \textit{Aedes}, \textit{Culex}, 
and \textit{Anopheles}, which are vectors of various diseases, including \textit{malaria, Zika, dengue}, and other \textit{arbovirosis}. The mathematical modeling of this method leads to a controlled dynamical system, whose study aims at designing release strategies to achieve specific objectives, such as the asymptotic stabilization of the target population close to extinction, the optimization of release costs, and the robustness of control against environmental variations or system parameter disturbances.
 The SIT mathematical control system that we consider in this work is introduced in \cite{strugarek2019use}.  In some previous works \cite{2024agbobidi,bidi_global_2025,2023Rossi}, the backstepping method was applied to construct a pivotal feedback law that robustly ensures the stabilisation of the control system to zero. Recently, another pivotal feedback law has been proposed in \cite{2024-Bliman-hal}. The application of these feedback laws requires state measurement, which can be challenging. We address this problem in our previous work by constructing an observer model that requires only the states of sterile males and wild males to estimate all other state variables (see \cite{2024agbobidi}) and by designing a feedback law that depends on the adult population using Reinforcement Learning of the SIT control system \cite{bidi2023reinforcement}.
Linear feedback laws are less flexible than non-linear laws, but they are robust and easier to implement in practice. It would be very useful to establish whether the quantity of sterile males released to globally stabilise the population to extinction needs to be \textit{proportional} to the density of wild males, total females or eggs, or some other variable state.

Some mathematical progress in this direction was made in our previous work \cite{bidi_global_2025}, where we aimed at proving that releasing a quantity of sterile males proportional to the wild male population, with a coefficient of proportionality satisfying certain conditions, successfully achieves global stabilisation of the target population to zero. However, proving global stability remains a challenging task.

The structure of this paper is as follows. The mathematical model studied in this paper is presented in Section~\ref{sec:Model}, while our main contributions are discussed in Section~\ref{sec:LFS}.
In Section~\ref{sec:feedbackEMMs} we present the stability results of the linear feedback law depending on $ E $ and $ M + M_s $. 
In Section~\ref{sec:feedLEM} we study a linear law including the states $ E $ and $ M $. Numerical simulations are provided to illustrate all these results.

\section{Sterile insecte technique control system}\label{sec:Model}
The mathematical modelling of the mosquito life cycle after neglecting the Allee effect is presented in \cite{strugarek2019use}. Its state variables are $E$ which is the density of the aquatic stage (which we will often just designate as eggs but which also includes larvae and pupal), $M$ the density of the wild adult males and $F$ the density of adult females mated by wild males. The dynamical system is 
\begin{gather}\label{eq:completeODElifecycle}
     \begin{aligned}
    &\dot{E} = \beta_E F \left(1-\frac{E}{K}\right) - \big( \nu_E + \delta_E \big) E,  \\
    &\dot{F} = \nu\nu_E E  - \delta_F F, \\
    &\dot{M} = (1-\nu)\nu_E E - \delta_M M,
\end{aligned}
\end{gather}
where $\beta_E>0$ is the oviposition rate, $\delta_E,\delta_M,\delta_F >0$ are the death rates for eggs, wild adult males and fertilised females respectively, $\nu_E>0$ is the hatching rate for eggs, $\nu\in (0,1)$ is the probability that a pupa will give rise to a female, and $(1-\nu)$ is therefore the probability that it will give rise to a male.  And, to simplify, we assume that females are fertilised as soon as they emerge from the pupal stage. $K>0$ is the environmental capacity for the aquatic phase. Table \ref{tab:tableparamsRL} shows the  values of these parameters that we consider in our work. The basic offspring number for this system is given by
\begin{equation}
    R = \frac{\beta_E\nu\nu_E}{\delta_F(\delta_E+\nu_E)}.
\end{equation}
 This dynamics admits a non-trivial equilibrium called the persistence equilibrium, which we denote by $X_{E^*}$, where
\begin{gather}\label{persistence equilibrium}
    \begin{aligned}
    &E^*:=  K(1-\frac{1}{R}),\\&
    X_{E^*} := (
        E^*, \; \frac{\nu\nu_E}{\delta_F}E^*,\;\frac{(1-\nu)\nu_E}{\delta_M} E^*)^T.
    \end{aligned}
\end{gather}

\newcolumntype{C}[1]{>{\centering\arraybackslash}m{#1}}
\begin{table}
    \centering
		\setlength{\tabcolsep}{0.01cm}
		\begin{tabular}{C{1cm} C{6cm} C{3cm} C{2.3cm} C{2cm}}
			\toprule
			  &  \textbf{Parameter name} & \textbf{Typical interval} & \textbf{Value in our work} & \textbf{Unit} \\
			\midrule
			$\beta_E$ & Effective fecundity &[7.46, 14.85]& 8& Day$^{-1}$\\
			$\nu_E$ & Hatching parameter &[0.005, 0.25]& 0.05&Day$^{-1}$\\
			$\delta_E$& Aquatic
			phase death rate & [0.023, 0.046]&0.03&Day$^{-1}$\\
			$\delta_F$& Female death rate &[0.033, 0.046]& 0.04&Day$^{-1}$\\
			$\delta_M$ & Male death rate &[0.077, 0.139]&  0.1&Day$^{-1}$\\
			$\delta_s$ & Sterilized male death rate& -& 0.12&Day$^{-1}$\\
			$\nu$ & Probability of emergence& -& 0.49&\\
   K & Environmental capacity for eggs &- &50000&\\

			\bottomrule
		\end{tabular}
  \caption{Parameters for the system \eqref{eq:completeODESIT}. This includes typical value ranges, which can be found in a population of Aedes polynesiensis in French Polynesia, as well as our chosen values in this work. Further details can be found in \cite{strugarek2019use,hughes2013modelling,riviere1988ecologie,suzuki1978breeding,chambers2011male,hapairai2014effect,hapairai2013population,hapairai2013studies}. 
  }
    \label{tab:tableparamsRL}
\end{table}
When sterile males are released into the population according to a control function denoted by $u$, the compartment of females mated by sterile males, denoted by $F_s$, is added to the system along with $M_s$, their evolutionary density. This leads to the following control system introduced in \cite{strugarek2019use} (once the Allee effect is neglected):

\begin{subequations}\label{eq:completeODESIT}
\begin{align}
    &\dot{E} = \beta_E F \left(1-\frac{E}{K}\right) - \big( \nu_E + \delta_E \big) E,  \label{eqEsit}\\
    &\dot{F} = \nu\nu_E E \frac{M}{M+\gamma_s M_s} - \delta_F F, \label{eqFsit}\\
     &\dot{M} = (1-\nu)\nu_E E - \delta_M M,\label{eqMsit} \\
    &\dot{F}_s = \nu\nu_E E \frac{\gamma M_s}{M+\gamma M_s} - \delta_F F_s, \label{eqFssit}\\
    &\dot{M}_s = u(t) - \delta_s M_s. \label{eqMssit}
\end{align}
\end{subequations}
Here $0<\gamma_s\leq1$ accounts for the fact that females may have a preference for fertile males and  the probability that a female mates with a fertile male is ${M}/{(M + \gamma M_s)}$.

\begin{proposition}
    Let $$ t \mapsto  (E(t),F(t),M(t),F_s(t),M_s(t))^T $$ be a solution of \eqref{eq:completeODESIT}, defined at time $ t = 0 $ and satisfying $$ x_0=(E(0),F(0),M(0),F_s(0),M_s(0))^T \in \mathbb{R}_+^5. $$ Then it is defined on $ [0,+\infty) $. Moreover, if $ E(0) \geq K $, then there exists a unique time $ t_0 \geq 0 $ such that $ E(t_0) = K $, and one has:
\begin{gather}
E(t) < K \quad \forall t > t_0.
\end{gather}
\end{proposition}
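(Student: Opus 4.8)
The plan is to combine the standard local-to-global existence argument with a sign analysis at the level set $\{E = K\}$. First I would invoke the Cauchy--Lipschitz theorem: on the open set where $M + \gamma_s M_s > 0$ and $M + \gamma M_s > 0$ the right-hand side of \eqref{eq:completeODESIT} is smooth (the only possible singularities of the mating fractions are at $M = M_s = 0$), so there is a unique maximal solution on some interval $[0, T_{\max})$. To see that the cone $\mathbb{R}_+^5$ is positively invariant --- which is needed both for the model to be meaningful and for the estimates below --- I would check the sign of each component of the vector field on the corresponding face of the boundary: on $\{E=0\}$ one has $\dot E = \beta_E F \geq 0$, on $\{F=0\}$ one has $\dot F = \nu\nu_E E \tfrac{M}{M+\gamma_s M_s}\geq 0$, on $\{M=0\}$ one has $\dot M = (1-\nu)\nu_E E \geq 0$, similarly for $F_s$, and on $\{M_s=0\}$ one has $\dot M_s = u(t) \geq 0$ for a nonnegative (admissible) release rate $u$. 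Hence no trajectory issued from $\mathbb{R}_+^5$ leaves it.

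The second step, and the core of the global-existence claim, is a uniform a priori bound driven by the logistic structure of \eqref{eqEsit}. The key observation is that whenever $E \geq K$ the factor $1 - E/K$ is nonpositive, so since $F \geq 0$ one has $\dot E \leq -(\nu_E+\delta_E)E < 0$; consequently $E(t) \leq \max(E(0),K)$ for all $t \in [0,T_{\max})$. With $E$ bounded, the remaining equations are (sub)linear: comparing $\dot F \leq \nu\nu_E E - \delta_F F$, $\dot M \leq (1-\nu)\nu_E E - \delta_M M$ and $\dot F_s \leq \nu\nu_E E - \delta_F F_s$ (using that both mating fractions lie in $[0,1]$) with the associated linear ODEs bounds $F, M, F_s$ on every finite interval, while the variation-of-constants formula for \eqref{eqMssit} bounds $M_s$ in terms of $M_s(0)$ and $\int_0^t u$. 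Since the solution therefore remains in a bounded set on each finite interval and stays away from any singularity, the usual blow-up alternative forces $T_{\max} = +\infty$.

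For the second assertion I would exploit the decisive cancellation that occurs exactly on $\{E=K\}$: there the logistic term vanishes identically, so $\dot E\big|_{E=K} = -(\nu_E+\delta_E)K < 0$ regardless of the values of the other state variables. This makes $K$ a one-way barrier for $E$. Assuming $E(0)\geq K$, the bound $\dot E \leq -(\nu_E+\delta_E)K$ valid while $E \geq K$ shows that $E$ decreases at least linearly, hence reaches the value $K$ no later than $t = (E(0)-K)/\big((\nu_E+\delta_E)K\big)$; I would then set $t_0 := \inf\{t \geq 0 : E(t)=K\}$, which is finite with $E(t_0)=K$ by continuity. To conclude $E(t) < K$ for all $t > t_0$ and the uniqueness of $t_0$, I would argue by contradiction: if $E$ returned to the value $K$ at some first time $t_1 > t_0$, then approaching from $E < K$ would force $\dot E(t_1) \geq 0$, contradicting $\dot E(t_1) = -(\nu_E+\delta_E)K < 0$.

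I expect the only genuinely delicate point to be the well-posedness near the singular set $\{M=M_s=0\}$ where the mating fractions are a priori undefined; this must be dealt with by adopting the usual continuous extension (the fractions are bounded and the offending terms are multiplied by quantities that vanish there) or by noting that positivity of $E$ keeps $\dot M>0$, so that trajectories immediately leave that set. Everything else is a routine comparison/bootstrap argument, and the barrier property of $\{E=K\}$ follows purely from the sign of $\dot E$ on that level set.
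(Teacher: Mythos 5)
The paper states this proposition without giving any proof, so there is nothing to compare your argument against; on its own merits, your proposal is correct and is the standard argument one would expect here. The three ingredients — quasi-positivity of the vector field on the faces of $\mathbb{R}_+^5$ (for invariance of the cone, using $u\geq 0$), the a priori bound $E(t)\leq\max(E(0),K)$ obtained from $\dot E\leq-(\nu_E+\delta_E)E$ on $\{E\geq K\}$ followed by linear comparison for $F,M,F_s,M_s$ (so the blow-up alternative gives $T_{\max}=+\infty$), and the strict barrier $\dot E\big|_{E=K}=-(\nu_E+\delta_E)K<0$ which yields both finite-time arrival at $K$ and the impossibility of a first return — together establish exactly the claim. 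The one point you rightly flag as delicate, well-posedness near $\{M=M_s=0\}$ where the mating fractions are undefined, is also left implicit by the paper; your suggested resolutions (continuous/bounded extension of the fractions, or the observation that $\dot M\geq-\delta_M M$ keeps $M(t)\geq M(0)e^{-\delta_M t}>0$ whenever $M(0)>0$, while $E>0$ pushes $M$ off zero instantly) are the usual ways this is handled in the SIT literature and would suffice to make the Cauchy--Lipschitz step rigorous. In short, your proof fills a gap the paper leaves open rather than deviating from an existing argument.
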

Let us consider
\begin{gather}
B_K:=\left\{(E,F,M,F_s,M_s)^T\in [0,+\infty)^5:\; E\leq K\right\}.
\end{gather}
Note that for all $u\geq 0$, $B_K$ is positively invariant for \eqref{eq:completeODESIT}. We define 
\begin{equation}
    t \mapsto x(x_0,t) := (E(t),F(t),M(t),F_s(t),M_s(t))^T,
\end{equation}
the solution of \eqref{eq:completeODESIT} with initial condition $x_0$.
In this paper, all numerical simulations are performed under the initial condition 
\begin{gather}\label{intialcond}
    x_0= (E(0),F(0),M(0),F_s(0),M_s(0))^T =(X_{E^*},0,0)^T.
\end{gather} 
\section{Linear feedback stabilization}\label{sec:LFS}
To enhance the robustness of the control strategy, we assume  that the sterilization and release process may reduce the longevity of the released sterile males. Consequently we assume
\begin{gather}\label{deltas>deltaM}
    \delta_s> \delta_M.
\end{gather}
\subsection{Linear  feedback laws depending on eggs density and total male density}\label{sec:feedbackEMMs}

Let us define $ \hat\delta := \delta_s - \delta_M $. 
For any $ \psi > 0 $, we consider a control function:
\begin{equation}\label{EMMs}
    u(x) = \psi (1 - \nu)\nu_E E + \hat\delta (M + M_s).
\end{equation}

After computing the non-trivial equilibrium point, we obtain the following condition on $ \psi $:  
\begin{gather}
    \psi \geq \frac{R - 1}{\gamma} - \frac{\hat\delta}{\delta_M},
\end{gather}
ensuring that the extinction equilibrium $ \mathbf{0} $ is the only equilibrium of   system \eqref{eq:completeODESIT} with the feedback law \eqref{EMMs}. In the following proposition we present  a result concerning the closed-loop system, which is important in proving the asymptotic stability result above.

\begin{proposition}\label{see:t>psi}
    Let $\psi>0 $. Let   $$t\in [0,+\infty) \mapsto x(x_0,t)= (E(t),F(t),M(t),F_s(t),M_s(t))^T$$  be a  solution of system \eqref{eq:completeODESIT} with the feedback law \eqref{EMMs} and   initial condition $ x_0\in B_K$. Then, for  all $t\geq \frac{\psi}{\hat\delta} $, $$M_s(t)\geq \psi M(t).$$
\end{proposition}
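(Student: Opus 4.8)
The plan is to reduce the statement to a single scalar linear ODE for the gap $z(t) := M_s(t) - \psi M(t)$ and then extract a quantitative lower bound from the variation of constants formula. First I would insert the feedback law \eqref{EMMs} into \eqref{eqMssit}: using $\hat\delta = \delta_s - \delta_M$, the coefficient of $M_s$ collapses since $\hat\delta - \delta_s = -\delta_M$, so the closed-loop equation for the sterile males becomes
\[
\dot M_s = \psi(1-\nu)\nu_E E + \hat\delta M - \delta_M M_s .
\]
Subtracting $\psi$ times \eqref{eqMsit} from this, the two copies of the source term $\psi(1-\nu)\nu_E E$ cancel, leaving the clean linear relation
\[
\dot z = \hat\delta M - \delta_M z .
\]

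Next I would integrate this by variation of constants, obtaining
\[
z(t) = e^{-\delta_M t}\, z(0) + \hat\delta \int_0^t e^{-\delta_M (t-s)} M(s)\,ds .
\]
Two elementary a priori bounds then drive the argument. Since $x_0 \in B_K \subset [0,+\infty)^5$ and the non-negative orthant remains positively invariant under the closed loop (so that $E(s), M(s), M_s(s) \geq 0$ for all $s \geq 0$), one has $z(0) = M_s(0) - \psi M(0) \geq -\psi M(0)$. Moreover, because $E \geq 0$, equation \eqref{eqMsit} gives $\dot M \geq -\delta_M M$, and Gronwall's inequality yields the exponential lower bound $M(s) \geq M(0)\, e^{-\delta_M s}$.

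Substituting both bounds into the integral representation, the integrand becomes $e^{-\delta_M (t-s)} M(0)\, e^{-\delta_M s} = M(0)\, e^{-\delta_M t}$, independent of $s$, so the integral evaluates to $M(0)\, e^{-\delta_M t}\, t$ and altogether
\[
z(t) \geq e^{-\delta_M t}\, M(0)\,(\hat\delta\, t - \psi) .
\]
Since $\hat\delta > 0$ by \eqref{deltas>deltaM} and $M(0) \geq 0$, the right-hand side is non-negative exactly when $\hat\delta\, t \geq \psi$, i.e.\ for $t \geq \psi/\hat\delta$, which is precisely the asserted inequality $M_s(t) \geq \psi M(t)$.

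The one genuinely delicate point is the choice of lower bound on $M$. The naive estimate $M(s) \geq 0$, combined with the monotonicity of $t \mapsto e^{\delta_M t} z(t)$, only yields $z(t) \geq e^{-\delta_M t} z(0)$, which is useless in the interesting case $z(0) < 0$ (too few sterile males present initially). Retaining instead the sharper decay bound $M(s) \geq M(0)\, e^{-\delta_M s}$ is what makes the accumulated source term grow at least linearly in $t$ once the common factor $e^{-\delta_M t}$ is removed, so that by time $\psi/\hat\delta$ it just overcomes the initial deficit $\psi M(0)$. I would also record explicitly the positive invariance of $[0,+\infty)^5$ under this feedback, since that is what legitimises both a priori bounds.
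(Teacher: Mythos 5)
Your proof is correct and follows essentially the same route as the paper's: both arguments exploit that the closed-loop equation for $M_s$ has decay rate $\delta_M$ and reduce, via variation of constants, to the identical key lower bound $M_s(t)-\psi M(t)\ge M(0)\,e^{-\delta_M t}(\hat\delta t-\psi)$. Your bookkeeping --- passing to the gap $z=M_s-\psi M$ and using the Gronwall bound $M(s)\ge M(0)e^{-\delta_M s}$ instead of the paper's explicit double-integral formulas and term cancellation --- is simply a tidier presentation of the same estimate.
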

\begin{proof}
Let us consider $t\mapsto (E(t),F(t),M(t),F_s(t),M_s(t))^T$ solution of the  system \eqref{eq:completeODESIT} with the feedback law \eqref{EMMs} for  initial data in $B_K$.
    Substituting \eqref{EMMs} in \eqref{eqMssit}, gives
\begin{equation*}
    \dot M_s = \psi (1-\nu)\nu_E E+\hat\delta M -\delta_MM_s.
\end{equation*}
For all $t\geq 0$ recall that
\begin{equation}
    M(t) = M^0e^{-\delta_Mt} + (1-\nu)\nu_E e^{-\delta_Mt}\int_0^t E(s)e^{\delta_M s}\, ds.
\end{equation}
    \begin{gather}\label{Ms-psicE}
        \begin{aligned}
        M_s(t) &= M_s^0e^{-\delta_st} + \hat\delta M^0 te^{-\delta_Mt} \\ &+ \hat\delta (1-\nu)\nu_E e^{-\delta_M t} \int_0^t (t-s)E(s) e^{\delta_M s} \, ds\\ &+ \psi (1-\nu)\nu_E e^{-\delta_M t} \int_0^t E(s) e^{\delta_M s} \, ds.
    \end{aligned}
    \end{gather}
    
We deduce that $M_s(t)-\psi M(t)\geq   M^0 e^{-\delta_Mt}(\hat\delta t-\psi) $.
Hence for all $t\geq \frac{\psi}{\hat\delta}$, $M_s(t)\geq \psi M(t)$ and this concludes the proof.
\end{proof}
\begin{remark}
    This result shows that when the sterile male is released according to the feedback law \eqref{EMMs} in the system \eqref{eq:completeODESIT}, for all $t\geq \frac{\psi}{\hat\delta}$, the density of the sterile male becomes greater than the male density for some chosen proportionality coefficient $\psi$. Referring to \cite{2024-Bliman-hal,bidi_global_2025}, if $\psi$ is greater than some value depending on the number of offspring $R$, the population converges to the extinction equilibrium. 
\end{remark}

Let $\mathcal{H} :\RR_+\longrightarrow \RR_+$  be the  function
$ \mathcal{H} :p\mapsto \frac{R}{1+\gamma p}$. Note that 
$\psi>\frac{R-1}{\gamma}$ implies $\mathcal{H}(\psi)<1$.
\begin{theorem}\label{thm:GASEMMs}
    We assume that 
    \begin{gather}\label{psi>R}
        \psi> \frac{R-1}{\gamma}.
    \end{gather}
    Then, $\textbf{0}$ is globally exponentially stable for the  system \eqref{eq:completeODESIT} with the feedback law \eqref{EMMs} in $[0,+\infty)^5$. 
More precisely, for every $c_r\in [0,c_e)$ with
\begin{gather}\label{c-e}
\small
    \begin{aligned}
    c_e :=  \min\left\{
    \frac{\nu\nu_E(1-\mathcal{H}(\psi))}{1+\mathcal{H}(\psi)}, \delta_M,
    \frac{\beta_E\delta_F(1-\mathcal{H}(\psi))}{2},\delta_F,\delta_s, (\nu_E+\delta_E),\delta_F\right\},
    \end{aligned}
\end{gather}
there exists $C>0$ such that, for every solution
$$t\in [0,+\infty) \mapsto x(x_0,t)=(E(t),F(t),M(t),F_s(t),M_s(t))^T$$  of \eqref{eq:completeODESIT} with the feedback law \eqref{EMMs} and  $ x_0\in [0,+\infty)^5$, one has
\begin{gather}\label{expconvofx(t)}
    \norm{x(x_0,t)}\leq C\norm{x_0}e^{-c_rt}\;\forall\; t\geq 0.
\end{gather}
\end{theorem}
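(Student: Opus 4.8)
The plan is to exploit the (almost) triangular structure of the closed loop together with Proposition~\ref{see:t>psi}. Recall first that $[0,+\infty)^5$ is positively invariant: on each face $\{x_i=0\}$ the corresponding component of the vector field is nonnegative (e.g.\ $\dot E|_{E=0}=\beta_E F\ge 0$ and $\dot M_s|_{M_s=0}=\psi(1-\nu)\nu_E E+\hat\delta M\ge 0$), so every solution issued from $[0,+\infty)^5$ remains there. The crucial consequence of Proposition~\ref{see:t>psi} is that for $t\ge T_0:=\psi/\hat\delta$ one has $M_s\ge\psi M$, hence the mating probability obeys $\frac{M}{M+\gamma_s M_s}\le \frac{1}{1+\gamma\psi}$. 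This bound is exactly what removes the destabilising nonlinear feedback and makes the stability margin $1-\mathcal H(\psi)>0$ (guaranteed by hypothesis~\eqref{psi>R}) appear. Note also that $F_s$ does not feed back into the other four equations, so the analysis can be organised as a cascade $(E,F)\to M\to M_s\to F_s$.

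The core step is the two-dimensional block $(E,F)$. Using $F\ge 0$, $E\ge 0$ and $1-E/K\le 1$, the egg equation gives $\dot E\le \beta_E F-(\nu_E+\delta_E)E$ on all of $[0,+\infty)^5$, while for $t\ge T_0$ the female equation together with the mating bound yields $\dot F\le \frac{\nu\nu_E}{1+\gamma\psi}E-\delta_F F$. I would introduce the associated comparison matrix
\[
A=\begin{pmatrix}-(\nu_E+\delta_E) & \beta_E\\[2pt] \frac{\nu\nu_E}{1+\gamma\psi} & -\delta_F\end{pmatrix},
\]
which is Metzler (nonnegative off-diagonal entries), so the system it generates is cooperative and the Kamke--M\"uller comparison principle applies: $(E(t),F(t))$ is dominated componentwise by the solution of $\dot{\bar z}=A\bar z$ started at $(E(T_0),F(T_0))$. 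Since $\operatorname{tr}A<0$ and $\det A=(\nu_E+\delta_E)\delta_F\,(1-\mathcal H(\psi))>0$, the matrix $A$ is Hurwitz, whence $E$ and $F$ decay exponentially. Equivalently, a weighted Lyapunov function $V_{EF}=pE+qF$ (or a quadratic $\tfrac12 E^2+\tfrac{\rho}{2}F^2$) combined with Young's inequality produces $\dot V_{EF}\le -2c_{EF}V_{EF}$, where the admissible rate $c_{EF}$ is bounded below precisely by the two coupled quantities $\frac{\nu\nu_E(1-\mathcal H(\psi))}{1+\mathcal H(\psi)}$ and $\frac{\beta_E\delta_F(1-\mathcal H(\psi))}{2}$ entering~\eqref{c-e}; the factor $1-\mathcal H(\psi)$ is the stability margin coming from~\eqref{psi>R}.

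Next I would propagate this decay through the cascade. With $E(t)\le C e^{-c_{EF}t}$, the equation $\dot M=(1-\nu)\nu_E E-\delta_M M$ is a scalar linear ODE with exponentially small forcing, so $M$ decays with rate $\min(\delta_M,c_{EF})$. Substituting the feedback into~\eqref{eqMssit} gives $\dot M_s=\psi(1-\nu)\nu_E E+\hat\delta M-\delta_M M_s$, again linear with exponentially decaying input, hence $M_s$ decays at the rate governed by its own damping; and finally $\dot F_s\le \nu\nu_E E-\delta_F F_s$ (using $\frac{\gamma M_s}{M+\gamma M_s}\le 1$) gives exponential decay of $F_s$ at rate $\min(\delta_F,\cdot)$. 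Collecting the rates of the five channels, and choosing $c_r<c_e$ strictly in order to absorb the polynomial factors $t\,e^{-\lambda t}$ that appear when two rates coincide, yields $\|x(x_0,t)\|\le C_1\|x_0\|e^{-c_r t}$ for all $t\ge T_0$.

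It remains to cover the transient $[0,T_0]$ with a constant $C$ independent of $x_0$. Here I would note that on $[0,+\infty)^5$ the closed-loop field is dominated by a \emph{linear} one: the only nonlinear term $-\beta_E FE/K$ is dissipative (it contributes nonpositively to $\frac{d}{dt}\|x\|^2$), the mating fractions are bounded by $1$, and the feedback $u$ is linear in the state. A Gronwall estimate then gives $\|x(t)\|\le e^{Lt}\|x_0\|$, so in particular $\|x(T_0)\|\le e^{LT_0}\|x_0\|$; splicing this with the estimate for $t\ge T_0$ and enlarging the constant produces the global bound~\eqref{expconvofx(t)} on all of $[0,+\infty)$. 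The main obstacle throughout is the nonsmooth mating nonlinearity $\frac{M}{M+\gamma_s M_s}$, which is neither linear nor Lipschitz near the origin: the entire argument hinges on Proposition~\ref{see:t>psi} to replace it, after the finite time $T_0$, by the constant bound $1/(1+\gamma\psi)$. The second delicate point is quantitative rather than qualitative, namely matching the explicit value of $c_e$: extracting the sharp coupled rate for the $(E,F)$ block with the exact margin $1-\mathcal H(\psi)$, and verifying that each term in the minimum~\eqref{c-e} is indeed attained by one of the five channels.
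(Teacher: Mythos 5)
Your proposal is correct and follows essentially the same route as the paper: it rests on Proposition~\ref{see:t>psi} to bound the mating fraction by $1/(1+\gamma\psi)$ after $T_0=\psi/\hat\delta$, establishes exponential decay of the core variables via a weighted linear (cooperative/Metzler) estimate whose stability margin is $1-\mathcal{H}(\psi)>0$, propagates the decay through the cascade to $M_s$ and $F_s$, and closes the transient $[0,T_0]$ with a Gronwall bound. The only cosmetic difference is that the paper uses a single linear Lyapunov function on the full $(E,F,M)$ block rather than your $(E,F)$ comparison matrix followed by treating $M$ as a downstream scalar equation; both yield the same rates.
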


\begin{proof}
We consider $$t\mapsto x(x_0,t) := (E(t),F(t),M(t),F_s(t),M_s(t))^T$$ the solution of \eqref{eq:completeODESIT} for some $x_0\in B_K$.
Let us consider the subsystem for \eqref{eqEsit}, \eqref{eqFsit}, and \eqref{eqMsit}. Note that this subsystem does not depend on \eqref{eqFssit} and \eqref{eqMssit}. We denote by $t\mapsto z(t):= (E(t),F(t),M(t))^T$ its  solution. We will prove that when $\psi$ satisfies condition \eqref{psi>R}, any solution $t\mapsto z(t)$ converges exponentially to $(0,0,0)^T$ with some convergence rate.
To prove this result, we consider the function $V$ defined by (see \cite{bidi_global_2025}):
\begin{equation}
    \begin{aligned}
        &V:  z\in [0,+\infty)^3  \mapsto V(z)\in \mathbb{R}_+, \\
        &V(z) := \frac{1+\mathcal{H}(\psi)}{1-\mathcal{H}(\psi)} E + M + \frac{2\beta_E}{\delta_F(1-\mathcal{H}(\psi))}F.
    \end{aligned}
\end{equation}
We observe that this function satisfies the following condition, which is important for proving the exponential stability:
\begin{equation}\label{cmVcM}
    Q_1\norm{z}\leq V(z)\leq Q_2\norm{z} \;\forall\; z\geq (0,+\infty)^3.
\end{equation}
where using \eqref{psi>R}
\begin{gather}
    Q_1 := \min \left\{\frac{1+\mathcal{H}(\psi)}{1-\mathcal{H}(\psi)}  ,1 , \frac{2\beta_E}{\delta_F(1-\mathcal{H}(\psi))}\right\}>0,\\
    Q_2 := \max \left\{\frac{1+\mathcal{H}(\psi)}{1-\mathcal{H}(\psi)}  ,1 , \frac{2\beta_E}{\delta_F(1-\mathcal{H}(\psi))}\right\}>0.
\end{gather}
Thus,
\begin{gather}
    \text{V is of class } \mathcal{C}^1,\\
    V(z)>V((0,0,0)^T)=0, \;\forall z \in [0, +\infty)^3\setminus\{(0,0,0)^T\},\\
     \text{$V(z)\to +\infty$ as $\norm{z}\to +\infty$.}
\end{gather}

Thus, $V$ is a candidate to be a Lyapunov function. It remains to prove that there exists a constant $c>0$ such that $\dot{V}(z(t))\leq -c V(z(t))$ for all $t\geq 0$. We start by computing its derivative:
\begin{align*}
    \dot{V}(z) &= \nabla V(z)\cdot g(z)\\&= \begin{pmatrix}\frac{1+\mathcal{H}(\psi)}{1-\mathcal{H}(\psi)}\\  1 \\ \frac{2\beta_E}{\delta_F(1-\mathcal{H}(\psi))}
\end{pmatrix}
\cdot
\begin{pmatrix} \beta_E F \left(1-\frac{E}{K}\right) - (\delta_E+\nu_E) E\\  (1-\nu)\nu_E E - \delta_M M\\   \frac{\nu\nu_E M}{M+\gamma M_s} E - \delta_F F
\end{pmatrix}.
\end{align*}
\begin{align*}
    \dot{V}(z) &= \beta_E \frac{1+\mathcal{H}(\psi)}{1-\mathcal{H}(\psi)} F-\delta_M M\\& -\frac{1+\mathcal{H}(\psi)}{1-\mathcal{H}(\psi)}\frac{\beta_E}{K} EF-\frac{2\beta_E\delta_F}{\delta_F(1-\mathcal{H}(\psi))} F \\&- (\nu_E+\delta_E)\frac{1+\mathcal{H}(\psi)}{1-\mathcal{H}(\psi)}E + (1-\nu)\nu_E E \\&+  \frac{2\beta_E\nu\nu_E}{\delta_F(1-\mathcal{H}(\psi))}\frac{ M}{M+\gamma M_s}E .
\end{align*}
From Proposition \ref{psi>R}, we deduce that for all $t \geq T_0 := \frac{\psi}{\hat\delta}$, we have 
\begin{equation}
    \frac{M}{M+\gamma M_s} \leq \frac{1}{1+\gamma \psi}.
\end{equation}
Thus, for all $t \geq T_0$,
\begin{equation*}
    \dot{V}(z) \leq  -\beta_E F - \delta_M M - \frac{1+\mathcal{H}(\psi)}{1-\mathcal{H}(\psi)}\frac{\beta_E}{K} EF - (\nu\nu_E + \delta_E)E.
\end{equation*}

Using \eqref{psi>R}, we deduce the existence of $c_a > 0$ such that for all $t \geq T_0$
\begin{align}
    \dot{V}(z) \leq -c_a V(z), \quad \forall z \in [0, +\infty)^3,
\end{align}
where
\begin{gather*}
    c_a = \min\left\{
    \frac{\nu\nu_E(1-\mathcal{H}(\psi))}{1+\mathcal{H}(\psi)}, \delta_M,
    \frac{\beta_E\delta_F(1-\mathcal{H}(\psi))}{2}\right\}.
\end{gather*}

We deduce from \eqref{cmVcM} that there exists $C > 0$ such that
\begin{gather}
    \norm{z(t)} \leq C\norm{z(T_0)} e^{-c_a(t-T_0)}, \quad \forall t \geq T_0.
\end{gather}
Note that for all $t \in [0,T_0]$, from the system \eqref{eq:completeODESIT} with the feedback law \eqref{EMMs}, we deduce that 
\begin{align}\label{ddtz(t)}
    \frac{d}{dt}\norm{z(t)} \leq C\norm{z(t)}.
\end{align}
In  \eqref{ddtz(t)} and until the end of this proof, $C$ denotes  constants which may vary form place to place but are independent of $x$.
Hence
\begin{gather}
    \norm{z(T_0)} \leq C\norm{z(0)}.
\end{gather}
Thus, there exists $C > 0$ such that 
\begin{gather*}\label{z<=z0}
    \norm{z(t)} \leq C\norm{z(0)} e^{-c_at}, \quad \forall t \geq 0.
\end{gather*}
From \eqref{eqMssit} and \eqref{EMMs}, we deduce that if $c_a = \delta_M$, then
\begin{align*}
    M_s(t) &\leq M_s^0 e^{-\delta_s t} + \hat\delta M^0 t e^{-\delta_M t} \\ &\quad + \hat\delta C\norm{z(0)} (1-\nu)\nu_E t^2 e^{-\delta_M t} \\ &\quad + \psi C\norm{z(0)} (1-\nu)\nu_E t e^{-\delta_M t}.
\end{align*}
Otherwise,
\begin{equation}\label{solms(t)}
    \begin{aligned}
    M_s(t) &\leq M_s^0 e^{-\delta_s t} + \hat\delta M^0 t e^{-\delta_M t} \\
    &\quad + \frac{\hat\delta C\norm{z(0)} (1-\nu)\nu_E}{(\delta_M - c_a)^2}\\& (e^{-c_a t} - e^{-\delta_M t} - (\delta_M - c_a)t e^{-\delta_M t}) \\
    &\quad + \frac{\psi C\norm{z(0)} (1-\nu)\nu_E}{\delta_M - c_a} (e^{-c_a t} - e^{-\delta_M t}).
    \end{aligned}
\end{equation}
We deduce that for $c_m<c_a$ there exists $C > 0$ independent of $x$ such that for all $t \geq 0$
\begin{gather}
    M_s(t) \leq C\norm{x(0)}e^{-c_m t}.
\end{gather}
From \eqref{eqFssit}, we deduce that
\begin{align}\label{solfs(t)}
    F_s(t) \leq F_s^0 e^{-\delta_F t} + \frac{C \nu \nu_E \norm{z(0)}}{\delta_F - c_a} (e^{-c_a t} - e^{-\delta_F t}).
\end{align}
We deduce that for $c_m<\min\{c_a,\delta_F\}$, there exists $C > 0$ independent of $x$ such that for all $t \geq 0$
\begin{gather}
    F_s(t) \leq C\norm{x(0)}e^{-c_m t}.
\end{gather}
Hence, for all $x_0 \in B_K$, there exists a constant $C > 0$ such that all solution $t \mapsto x(x_0,t)$ of \eqref{eq:completeODESIT} with the feedback law \eqref{EMMs} satisfies
\begin{gather}
    \norm{x(x_0,t)} \leq C\norm{x_0}e^{-c_m t}, \quad \forall t \geq 0.
\end{gather}
If $x_0\notin B_K$  then there exists a unique time $ t_0 \geq 0 $ such that $ E(t_0) = K $, and one has:
\begin{gather}
E(t) < K \quad \forall t > t_0.
\end{gather}
For all $t\in [0,t_0]$ one has
\begin{equation}\label{EFM<}
    \begin{aligned}
    &E(t)\leq E^0 e^{-(\delta_E+\nu_E) t},\\&
    F(t) \leq F^0e^{-\delta_Ft} + \frac{\nu\nu_EE^0}{\delta_F-(\delta_E+\nu_E)}(e^{-(\nu_E+\delta_E)t}-e^{-\delta_Ft}),
    \\&
    M(t) \leq M^0e^{-\delta_Mt} + \frac{(1-\nu)\nu_EE^0}{\delta_M-\delta_F}(e^{-\delta_Ft}-e^{-\delta_Mt}),\\&
    F_s(t) \leq F_s^0e^{-\delta_Ft} + \frac{\nu\nu_EE^0}{\delta_F-(\delta_E+\nu_E)}(e^{-(\nu_E+\delta_E)t}-e^{-\delta_Ft}),
\end{aligned}
\end{equation}
Substitution \eqref{EFM<}  and \eqref{EMMs} in \eqref{eqMssit}, we prove that  for every $$c_w <  \min\{\delta_M,\delta_F,\delta_s, (\nu_E+\delta_E)\}$$ there exists $C>0$ such that 
\begin{equation}
    \norm{M_s(t)}\leq C \norm{x_0} e^{-c_w t},\;\forall t\in [0,t_0],
\end{equation}
and therefore, using also \eqref{EFM<}
\begin{equation}
    \norm{x(x_0,t)}\leq C \norm{x_0} e^{-c_w t},\;\forall t\in [0,t_0].
\end{equation}
 Hence we conclude that when $\psi$ satisfies \eqref{psi>R}, for all $c_r<c_e$ with $c_e$ defined in \eqref{c-e},
 there exists $C>0$ such that for all  $ x_0\in [0,+\infty)^5$ 
\begin{gather}
    \norm{x(x_0,t)}\leq C\norm{x_0}e^{-c_rt}\;\forall\; t\geq 0.
\end{gather}
\end{proof}
The biological interpretation of this result is as follows: assuming that females exhibit no preference for fertile males (i.e. $\gamma=1$), Theorem \ref{thm:GASEMMs} guarantees that the following linear control law
\begin{equation}
    u(x) = 2R(1-\nu)\nu_E E + (\delta_s-\delta_M) (M + M_s)
\end{equation}
globally asymptotically stabilizes the sterile control system to $\textbf{0}$. 
  We run the dynamics as long as $E > 1$ and the result is shown in  Figure \ref{fig:EMMs}.
\begin{figure}[H]
		\centering
		\includegraphics[width=\textwidth]{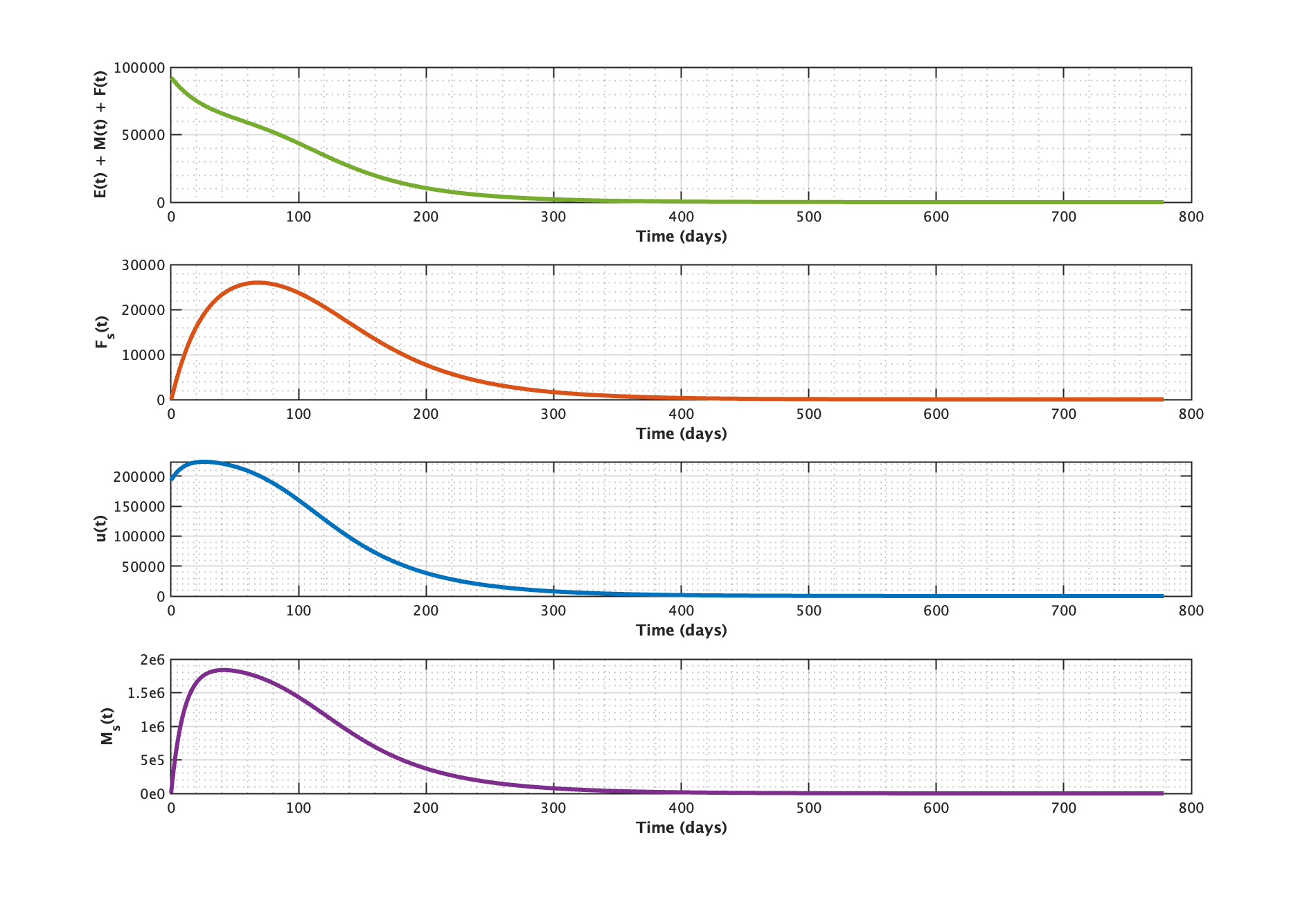}
    \caption{Plot of the  system  \eqref{eq:completeODESIT} with the feedback law \eqref{EMMs} for $\psi = 2R$. According to the parameters fixed in Table \ref{tab:tableparamsRL}, we have $R = 76.56$}
    \label{fig:EMMs}
\end{figure}

\subsection{Linear  feedback laws depending on aquatic phase and wild male densities}
\label{sec:feedLEM}

We consider the  control function

\begin{gather}\label{deltaEMu=}
    u(x) = \alpha M + (1-\nu)\nu_E\sigma E.
\end{gather}
\begin{proposition}\label{propEM}
Let $$t\mapsto x(x_0,t)=(E(t),F(t),M(t),F_s(t),M_s(t))^T$$ be a solution to the  system \eqref{eq:completeODESIT} with the feedback law \eqref{deltaEMu=}  and initial data $x_0\in B_K$.
    For any $\sigma>0$,  for all $\alpha>\sigma\hat\delta$, one has $\forall\;t\geq T_e=\frac{-\ln(1-\frac{(\delta_s-\delta_M)\sigma}{\alpha})}{\alpha}$
    \begin{gather}
         M_s(t)\geq \sigma M(t).
    \end{gather}
\end{proposition}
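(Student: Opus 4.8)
The plan is to adapt the strategy of Proposition~\ref{see:t>psi}, but to organise the argument around the scalar quantity $W(t):=M_s(t)-\sigma M(t)$ instead of the explicit integral representation of $M_s$. First I would insert the feedback \eqref{deltaEMu=} into \eqref{eqMssit}, giving $\dot M_s=\alpha M+(1-\nu)\nu_E\sigma E-\delta_s M_s$, and differentiate $W$ using this together with \eqref{eqMsit}. The decisive observation is that the two contributions proportional to $(1-\nu)\nu_E\sigma E$ cancel, so that $W$ obeys the one-dimensional linear equation
\[
\dot W=(\alpha-\sigma\hat\delta)\,M-\delta_s W ,
\]
with $\hat\delta=\delta_s-\delta_M$. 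The whole statement then reduces to a scalar comparison driven by $M$, and the hypothesis $\alpha>\sigma\hat\delta$ is precisely what renders the forcing coefficient $\alpha-\sigma\hat\delta$ positive.

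Next I would bound the forcing from below. Since $x_0\in B_K$ and $B_K$ is positively invariant, the solution stays in $[0,+\infty)^5$, hence $E(s)\geq 0$; the Duhamel formula for $M$ recalled in the proof of Proposition~\ref{see:t>psi} then gives $M(t)\geq M^0 e^{-\delta_M t}\geq 0$. Substituting this lower bound into the equation for $W$ and integrating the resulting scalar linear ODE, with initial value $W(0)=M_s^0-\sigma M^0\geq -\sigma M^0$ (the least favourable case being $M_s^0=0$), yields
\[
W(t)\geq -\sigma M^0 e^{-\delta_s t}+\frac{(\alpha-\sigma\hat\delta)M^0}{\hat\delta}\bigl(e^{-\delta_M t}-e^{-\delta_s t}\bigr),
\]
where I used the identity $\delta_s-\hat\delta=\delta_M$ to evaluate $\int_0^t e^{-\delta_s(t-s)}e^{-\delta_M s}\,ds$. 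The part of $M$ generated by $E$, discarded when writing $M(t)\geq M^0 e^{-\delta_M t}$, only increases the right-hand side, so dropping it is harmless.

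Finally I would read off the crossing time. Demanding that the right-hand side be nonnegative, the factor $M^0$ cancels and the inequality collapses to
\[
1-\frac{\sigma\hat\delta}{\alpha}\geq e^{-\hat\delta t},
\]
whose logarithm is legitimate exactly because $\alpha>\sigma\hat\delta$ makes $1-\sigma\hat\delta/\alpha$ strictly positive; solving for $t$ delivers the threshold $T_e$ announced in the statement, beyond which $W(t)\geq 0$, i.e. $M_s(t)\geq\sigma M(t)$. I expect the only subtle point to be the transient regime in which $W(0)$ may be negative: one has to verify that the accumulation from the positive forcing $(\alpha-\sigma\hat\delta)M$ overcomes the initial deficit $-\sigma M^0$ within a finite time, and it is exactly here that $\alpha>\sigma\hat\delta$ enters in an essential way, since without it the logarithm defining $T_e$ would not exist and no crossing could be guaranteed.
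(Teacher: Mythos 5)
Your argument is correct, and it takes a genuinely cleaner route than the paper's. The paper writes out the full variation-of-constants formulas for $M_s(t)$ and $\sigma M(t)$, each carrying an integral against $E$, subtracts them, and must then check separately that the combined term
\[
(1-\nu)\nu_E\int_0^t E(s)\Bigl(e^{-(t-s)\delta_s}-e^{-(t-s)\delta_M}\Bigr)\Bigl(\sigma-\tfrac{\alpha}{\hat\delta}\Bigr)\,ds
\]
is nonnegative, which requires observing both that the exponential difference is nonpositive (since $\delta_s>\delta_M$) and that $\sigma-\alpha/\hat\delta\leq 0$. By passing to the scalar equation $\dot W=(\alpha-\sigma\hat\delta)M-\delta_s W$ for $W=M_s-\sigma M$, you make the $E$-contributions cancel identically at the level of the ODE, so positivity of the state enters only through the elementary bound $M(t)\geq M^0e^{-\delta_M t}$, and the hypothesis $\alpha>\sigma\hat\delta$ is isolated in a single positive forcing coefficient. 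Both routes then collapse to the same scalar inequality $e^{-\hat\delta t}\leq 1-\sigma\hat\delta/\alpha$ and hence the same crossing time; what your version buys is that the sign analysis is automatic rather than term-by-term, and the mechanism (a linear filter of $M$ with positive gain overcoming the initial deficit $-\sigma M^0$) is explicit. Your treatment of the edge cases ($M_s^0=0$ as the worst case, $M^0=0$ making $W\geq 0$ trivially) is also sound.

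One small caveat: solving $e^{-\hat\delta t}\leq 1-\sigma\hat\delta/\alpha$ yields $t\geq -\ln\bigl(1-\sigma\hat\delta/\alpha\bigr)/\hat\delta$, with $\hat\delta$ in the denominator, not $\alpha$ as written in the statement's $T_e$. The paper's own proof derives exactly the same condition and then states the threshold with $\alpha$ in the denominator, so this appears to be a typo in the paper; but you should not claim that your computation "delivers the threshold announced in the statement" verbatim — it delivers the corrected one.
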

\begin{proof}

Let 
 $$t\mapsto x(x_0,t)=(E(t),F(t),M(t),F_s(t),M_s(t))^T$$  be a solution of  system \eqref{eq:completeODESIT} with the feedback law \eqref{deltaEMu=}  and initial data $x_0\in B_K$. 
 for all $t\geq 0$
     \begin{align*}
        M_s(t) &= M_s^0e^{-\delta_st} + \frac{\alpha M^0}{\hat\delta }e^{-\delta_Mt}(1-e^{-\hat\delta t})+ (1-\nu)\nu_E \sigma e^{-\delta_s t} \int_0^t E(s) e^{\delta_s s} \, ds\\&+ \frac{(1-\nu)\nu_E \alpha}{\hat\delta} e^{-\delta_M t} \int_0^t E(s) e^{\delta_M s} \, ds - \frac{(1-\nu)\nu_E \alpha }{\hat\delta}  e^{-\delta_s t} \int_0^t E(s) e^{\delta_s s} \, ds, 
    \end{align*}
and
    \begin{equation*}
        \sigma M(t) = \sigma M^0e^{-\delta_Mt}+ \sigma (1-\nu)\nu_Ee^{-\delta_Mt}\int_0^t E(s)e^{\delta_Ms} ds.
    \end{equation*}
    Hence,
\begin{align*}
    M_s(t)-\sigma M(t)&\geq M^0 e^{-\delta_Mt} \bigg(\frac{\alpha}{\hat\delta}(1-e^{-\hat\delta t})-\sigma\bigg) \\&+ (1-\nu)\nu_E  \int_0^t E(s) \bigg( e^{-(t-s)\delta_s}- e^{-(t-s)\delta_M}\bigg)\bigg(\sigma-\frac{\alpha}{\hat\delta}\bigg) \, ds.
\end{align*}
From \eqref{deltas>deltaM},
  $\hat\delta>0$ and
    \begin{align*}
         \int_0^t E(s) \bigg( e^{-(t-s)\delta_s}- e^{-(t-s)\delta_M}\bigg)\bigg(\sigma-\frac{\alpha}{\hat\delta}\bigg) \, ds \geq 0,
    \end{align*}
     is equivalent to
    $\alpha \geq  \sigma\hat\delta$.
    Note that if $\alpha= \sigma\hat\delta$,   $\frac{\alpha}{\hat\delta}(1-e^{-\hat\delta t})-\sigma<0$.  For $\alpha>\sigma\hat\delta$,
    $\frac{\alpha}{\hat\delta}(1-e^{-\hat\delta t})-\sigma\geq 0$ is equivalent to $t>\frac{-\ln(1-\frac{(\delta_s-\delta_M)\sigma}{\alpha})}{\alpha}$. We conclude that by choosing $\alpha>\sigma\hat\delta$, for any $\sigma>0$, one has $M_s(t)\geq \sigma M(t)$ for all $t\geq T_e=\frac{-\ln(1-\frac{(\delta_s-\delta_M)\sigma}{\alpha})}{\alpha}$.\\

  This concludes the proof. 
\end{proof}
\begin{theorem}\label{thm:GASME}
   We assume 
    \begin{gather}\label{sigma>R}
        \sigma> \frac{R-1}{\gamma}.
    \end{gather}
    Then, $\textbf{0}$ is globally exponentially stable  for the  system \eqref{eq:completeODESIT} with the feedback law \eqref{deltaEMu=} in $[0,+\infty)^5$. More precisely, for every $c_p\in [0,c_b)$ with
\begin{gather}\label{c-b}
\small
    \begin{aligned}
    c_b :=  \min\left\{
    \frac{\nu\nu_E(1-\mathcal{H}(\sigma))}{1+\mathcal{H}(\sigma)}, \delta_M,
    \frac{\beta_E\delta_F(1-\mathcal{H}(\sigma))}{2},\delta_F,\delta_s, (\nu_E+\delta_E),\delta_F\right\},
    \end{aligned}
\end{gather}
there exists $C>0$ such that, for every solution
$$t\in [0,+\infty) \mapsto x(x_0,t)=(E(t),F(t),M(t),F_s(t),M_s(t))^T$$  of \eqref{eq:completeODESIT} with the feedback law \eqref{EMMs} and  $ x_0\in [0,+\infty)^5$, one has
\begin{gather}
    \norm{x(x_0,t)}\leq C\norm{x_0}e^{-c_bt}\;\forall\; t\geq 0.
\end{gather}
\end{theorem}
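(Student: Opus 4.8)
The plan is to follow the proof of Theorem~\ref{thm:GASEMMs} almost verbatim, replacing the coefficient $\psi$ by $\sigma$ and invoking Proposition~\ref{propEM} wherever Proposition~\ref{see:t>psi} was used. The only difference between the two closed-loop systems is the expression of $M_s$ produced by the control, so the Lyapunov analysis of the $(E,F,M)$-subsystem \eqref{eqEsit}--\eqref{eqMsit}, which is coupled to $M_s$ only through the ratio $M/(M+\gamma M_s)$, transfers without change once that ratio is controlled.

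First I would observe that \eqref{sigma>R} gives $\mathcal{H}(\sigma)<1$, so the candidate Lyapunov function
\[
    V(z):=\frac{1+\mathcal{H}(\sigma)}{1-\mathcal{H}(\sigma)}E+M+\frac{2\beta_E}{\delta_F(1-\mathcal{H}(\sigma))}F,
    \qquad z=(E,F,M)^T,
\]
is positive definite, radially unbounded and equivalent to $\norm{z}$ on $[0,+\infty)^3$, with constants $Q_1,Q_2>0$ as in \eqref{cmVcM}. Then I would compute $\dot V$ along the subsystem; this is the same computation as in Theorem~\ref{thm:GASEMMs} with $\psi$ replaced by $\sigma$. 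The key input is Proposition~\ref{propEM}: for $\alpha>\sigma\hat\delta$ and all $t\geq T_e$ one has $M_s(t)\geq\sigma M(t)$, whence
\[
    \frac{M}{M+\gamma M_s}\leq\frac{1}{1+\gamma\sigma}.
\]
Inserting this bound and using \eqref{sigma>R} produces a constant $c>0$ with $\dot V(z)\leq -c\,V(z)$ for every $t\geq T_e$, hence exponential decay of $\norm{z(t)}$ on $[T_e,+\infty)$. On the compact interval $[0,T_e]$ a linear growth estimate $\frac{d}{dt}\norm{z(t)}\leq C\norm{z(t)}$ controls $\norm{z(T_e)}$ by $C\norm{z(0)}$, and the two estimates combine into a global bound $\norm{z(t)}\leq C\norm{z(0)}e^{-c_b t}$.

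It remains to propagate this decay to the last two components and to treat initial data outside $B_K$. I would integrate \eqref{eqMssit} with the control \eqref{deltaEMu=}, using the explicit formula for $M_s(t)$ already written in the proof of Proposition~\ref{propEM} together with the bound on $\norm{z}$, to get $M_s(t)\leq C\norm{x_0}e^{-c_m t}$ for every $c_m<c_b$; integrating \eqref{eqFssit} then yields the same type of bound for $F_s$. For $x_0\notin B_K$ the entry-time argument gives a unique $t_0$ with $E(t_0)=K$; on $[0,t_0]$ the a priori bounds \eqref{EFM<} apply, and for $t\geq t_0$ the trajectory is in $B_K$, so the previous estimate closes the argument.

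The hard part is bookkeeping rather than conceptual. Because the control \eqref{deltaEMu=} and the distinct death rates $\delta_s\neq\delta_M$ make $M_s$ a convolution against both $e^{-\delta_s s}$ and $e^{-\delta_M s}$, the decay estimate for $M_s$ cannot simply be copied from Theorem~\ref{thm:GASEMMs} but must be recomputed with these two kernels and with the threshold $T_e$ in place of $\psi/\hat\delta$. One then has to check that the rates obtained for $M_s$ and $F_s$ are not better than required, which is exactly why the entries $\delta_s$, $\delta_F$ and $(\nu_E+\delta_E)$ appear in the definition \eqref{c-b} of $c_b$.
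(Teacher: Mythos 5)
Your proposal is correct and follows exactly the route the paper takes: the paper's own proof of Theorem~\ref{thm:GASME} is a single sentence stating that one repeats the proof of Theorem~\ref{thm:GASEMMs} with Proposition~\ref{propEM} in place of Proposition~\ref{see:t>psi}, which is precisely your plan. In fact you supply more detail than the paper does (in particular the observation that the $M_s$ estimate must be recomputed with the two kernels $e^{-\delta_s s}$ and $e^{-\delta_M s}$ and the threshold $T_e$, and the implicit hypothesis $\alpha>\sigma\hat\delta$ needed to invoke Proposition~\ref{propEM}).
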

\begin{proof}
The proof of  this theorem is similar to the proof of Theorem \ref{thm:GASEMMs} using Proposition \ref{propEM} instead of Proposition \ref{see:t>psi}.
\end{proof}
The biological interpretation of this result is as follows: assuming that females exhibit no preference for fertile males (i.e. $\gamma=1$), Theorem \ref{thm:GASME} guarantees that the linear control law
\begin{equation}
    u(x) = 4R(\delta_s-\delta_M)M + 2R(1-\nu)\nu_E E,
\end{equation}
  globally asymptotically stabilizes the target population to $\textbf{0}$  when the death rate of sterile male released is higher than that of wild males. We run the system as long as $E > 1$.

\begin{figure}[H]
		\centering
		\includegraphics[width=\textwidth]{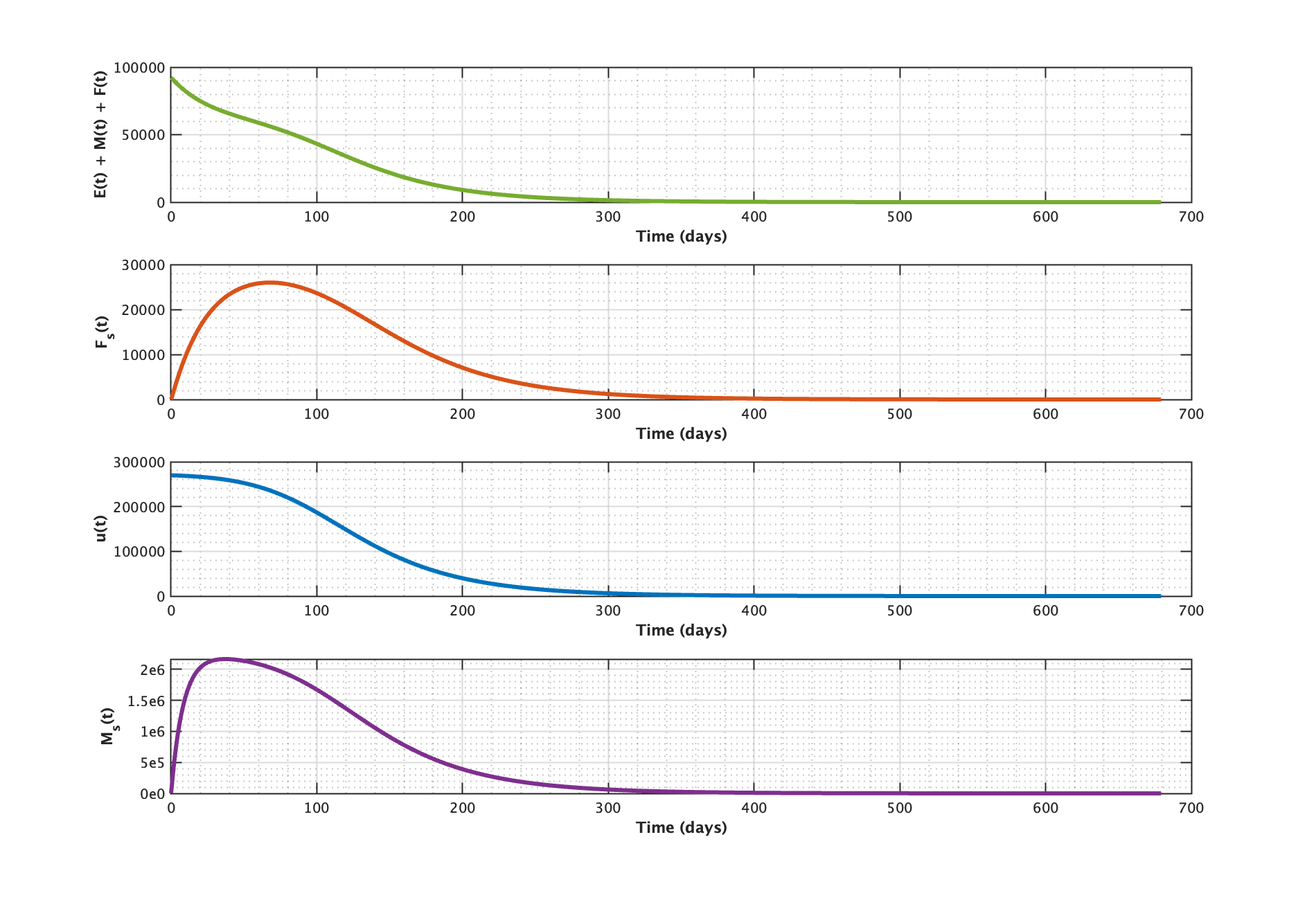}
    \caption{Plot of the  system  \eqref{eq:completeODESIT} with the feedback law \eqref{deltaEMu=} for $\sigma = 2R$, $R=76.56$.}
    \label{fig:uEMglobal}
\end{figure}

\section{Conclusion}

The global asymptotic stability of the SIT control system has been proved for many nonlinear feedback control laws (see \cite{2024agbobidi,bidi_global_2025,2023Rossi,2024-Bliman-hal}). In this work, we prove this result for some simple linear laws. 

We study linear feedback laws depending on the number of wild male mosquitoes $M$ and egg density $E$ in Section \ref{sec:feedLEM}. Thanks to Theorem \ref{thm:GASME}, for instance, the feedback law  $u = 4R(1-\nu)\nu_E E + 2R(\delta_s-\delta_M)M$
stabilizes the system when females have no preference for fertile males ($\gamma=1$). In practice, we use ovitraps to measure the egg density in the population. To estimate the wild male density, we use a technique called release-recapture: sterile males are marked before being released into the population. To measure the adult population, specific traps are placed in different areas to capture them. Thanks to their markings, wild males can be distinguished from sterile ones. 

We also study linear feedback laws depending on the total number of male mosquitoes, $M+M_s$, and egg density $E$ in Section \ref{sec:feedbackEMMs}. Thanks to Theorem \ref{thm:GASEMMs}, for instance, the feedback law  $u = 2R(1-\nu)\nu_E E + (\delta_s-\delta_M)(M+M_s)$
stabilizes the system to zero under the assumption that females have no preference for fertile males ($\gamma=1$). The advantage of this linear law is that it depends on the total number of males in the dynamics. It does not require distinguishing wild male mosquitoes from sterile ones in the traps. 
  
\section*{Acknowledgements}
The author wishes to thank Luis Almeida and Jean-Michel Coron for having drawn his attention to this problem and for the many  enlightening discussions during this work.

\bibliographystyle{plain}
\bibliography{bibliographie}

\end{document}